\newtheorem{theorem}{Theorem}[section]
\newtheorem{lemma}[theorem]{Lemma}
\theoremstyle{definition}
\newtheorem{definition}[theorem]{Definition}
\theoremstyle{remark}
\newtheorem{remark}[theorem]{Remark}
\newcommand{\inner}[1]{\left\langle#1\right\rangle}
\newcommand{\norm}[1]{\left\lVert#1\right\rVert}
\newcommand{\abs}[1]{\left\lvert#1\right\rvert}
\newcommand{\pa}[1]{\left( #1 \right)}
\newcommand{\br}[1]{\left\lbrace #1\right\rbrace}
\newcommand{\R}{\mathbb{R}}
\newcommand{\N}{\mathbb{N}}
\newcommand{\PP}{\mathbf{P}}
\newcommand{\W}{\mathbf{W}}
\numberwithin{equation}{section}
\def\1{\mathbbm{1}}
\definecolor{bostonuniversityred}{rgb}{0.8, 0.0, 0.0}
\definecolor{byzantium}{rgb}{0.44, 0.16, 0.39}
\newcommand{\D}{{\bf D}}
\newcommand{\II}{{\bf I}}
\newcommand{\C}{{\bf C}}
\newcommand{\wt}[1]{\widetilde{#1}}
\DeclareMathOperator{\dive}{{\mathrm{div}}}
\DeclareMathOperator{\diag}{{\mathrm{diag}}}
\newcommand{\conA}{conditions \eqref{item:cond_semipositive}-\eqref{item:cond_no_invariant_subspace_to_kernel}}
\newcommand{\h}{L^2\left(\mathbb{R}^d,f_\infty^{-1}\right)}
\DeclareMathOperator{\linspan}{{\mathrm{span}}}
\DeclareMathOperator{\re}{{\mathrm{Re}}}
\title{Sharp Decay of the Fisher Information for Degenerate Fokker--Planck Equations}
\author{Anton Arnold$^1$, Amit Einav$^2$ \& Tobias W\"ohrer$^3$} \date{\today}
\address{$^1$Vienna University of Technology, Institute of Analysis and Scientific Computing, Wiedner Hauptstr. 8-10, A-1040 Wien, Austria}
\email{anton.arnold@tuwien.ac.at}
\address{$^2$Durham University, School of Mathematical Sciences, Upper Mountjoy Campus, Stockton Road, DH1 3LE, Durham, United Kingdom}
\email{amit.einav@durham.ac.uk}
\address{$^3$Technical University of Munich, Faculty of Mathematics, Boltzmannstra\ss e 3, 85748 Garching bei M\"unchen}
\email{tobias.woehrer@tum.de}
\thanks{The first author was supported by the FWF-funded SFB \#F65. 
	The third author was supported by the FWF under grant no. J 4681-N}
\begin{document}
\date{}
\maketitle

\begin{abstract}
The goal of this work is to find the sharp rate of convergence to equilibrium under the quadratic Fisher information functional for solutions to Fokker-Planck equations governed by a constant drift term and a constant, yet possibly degenerate, diffusion matrix. A key ingredient in our investigation is a recent work of Arnold, Signorello, and Schmeiser, \cite{ASS} where the $L^2$-propagator norm of such Fokker-Planck equations was shown to be identical to the propagator norm of a finite dimensional ODE which is determined by matrices that are intimately connected to those appearing in the associated Fokker-Planck equations.
%f . The main difficulty of our extension stems from the fact that the Fisher Information contains the gradient of rescaled solutions of the FPE, which follow an evolution different to that of the FPE but still is closely related in the associated Hilbert space norm.
\end{abstract}
{\tiny
	{KEYWORDS.}
	Fokker-Planck equation, large time behaviour, degenerate evolution, Fisher Information}
\\{\tiny{MSC.}
	35Q84 (Fokker-Planck equations), 35Q82 (PDEs in connection with statistical mechanics), 35H10 (Hypoelliptic equations), 35K10 (second order parabolic equations), 35B40 (Asymptotic behavior of solutions to PDEs)}
%\subsection{ToDos}
%\begin{itemize}
%	\item Add Remark concerning rigorous differentiation of integrals
%	\item upper/lower contractivity
%	\item Reformatting of equations?
%\end{itemize}	
	
%	\tableofcontents

%\nocite{*}

\section{Introduction}\label{sec:intro}
\subsection{Background and the setting of the problem}\label{subsec:background}
Recent years have seen renewed interest in the long time behaviour of Fokker-Planck equations - particularly in various degenerate setting. For the sake of brevity in this short note, we shall avoid recalling the history and importance of such equations and refer the interested reader to works such as \cite{AEW18,AE,AMTU,ASS} for more details. 

Our work will focus on the following type of Fokker-Planck equation:
\begin{equation}\label{eq:fokkerplanck}
	\partial_t f(x,t)=-Lf(x,t):=\text{div}\pa{\D\nabla f(x,t)+\C xf(x,t)}, \quad\quad t>0, x\in\R^d,
\end{equation}
with appropriate initial conditions $f_0(x)$, where $\D$ and $\C$ are real valued constant matrices that satisfy the following conditions:
\begin{enumerate}[(A)]
	\item\label{item:cond_semipositive} $\D$ is positive semi-definite with
	$$1\leq r:=\operatorname{rank}\pa{\D} \leq d.$$
	\item\label{item:cond_positive_stability} All eigenvalues of $\C$ have positive real part (i.e. $\C$ is \textit{positive stable}).
	\item\label{item:cond_no_invariant_subspace_to_kernel} There exists no $\C^T$-invariant subspace of $\operatorname{ker}\pa{\D}$.
\end{enumerate}

As was shown in \cite{AE}, conditions \eqref{item:cond_semipositive}-\eqref{item:cond_no_invariant_subspace_to_kernel} guarantee the existence of a unique unit mass steady state to \eqref{eq:fokkerplanck}, $f_\infty(x)$, which is of Gaussian type. Furthermore, in their study \cite{AAS}, the authors have shown that there exists a \textit{linear reversible} transformation of the spatial variables under which the Fokker-Planck equation \eqref{eq:fokkerplanck} transforms into the same type of equation with drift and diffusion matrices $\wt{\C}$ and $\wt{\D}$ such that $\wt{\D}$ is diagonal and equals the symmetric part of $\wt{\C}$. As the connection between $\C$, $\D$ and $\wt{\C}$, $\wt{\D}$ is well established (see \cite{AAS,ASS} for instance) we will, for simplicity, assume from this point onwards that 
$$\D= \diag(d_1,\ldots,d_r, 0 \ldots 0) = \C_{s} = \frac{\C+\C^T}{2}.$$
We sometimes refer to this equation as the \textit{normalised Fokker-Planck equation}.
In this setting the unique steady state to our equation, which is, in fact, the equilibrium of the system, is given by the standard Gaussian
$$f_{\infty}(x) = \frac{1}{\pa{2\pi}^{\frac{d}{2}}}e^{-\frac{\abs{x}^2}{2}}.$$
 
Our goal in this work is to explore the convergence to equilibrium of (unit mass) solutions of \eqref{eq:fokkerplanck} under the $H^1$-based Fisher information. \\
The study of the convergence to equilibrium under the framework of \textit{relative entropies}, which are (usually non-linear) Lyaponuv functionals for the flow of the equation that ``measure'' how close the solution is to its equilibrium, is well established. Here we will assume familiarity with it and with the standard way to investigate such entropies - the so-called \textit{entropy method}. We refer the interested reader to \cite{AEW18,AEW22,AE,AMTU} amongst many other excellent introductions to this topic.

In many of the aforementioned studies, the investigation of the connection between the entropy and its production\footnote{The entropy production is defined as minus the functional that appears when we differentiate the entropy under the flow of the evolution.} relies on the so-called Bakry-Emery method (see \cite{BaEmH84, BaEmD85, BGL}), which involves differentiation in time of the entropy production. In the setting of  Fokker-Planck equations, issues with this method start to appear when the diffusion matrix $\D$ is degenerate and/or the drift matrix $\C$ has defects. In this case, the purely geometric functional inequality one looks for when using the Bakry-Emery method is not readily available\footnote{In fact, when $\D$ is degenerate one wouldn't be able to find the desired inequality that connects the entropy and its production.} - though one can circumvent this issue by allowing for \textit{time dependency} in the functional inequality, as was done by Monmarch\'e in \cite{MoH19}. 

It is still possible to obtain sharp long time behaviour of entropies of the form 
\begin{equation}\label{eq:def_Of_e_p}
	e_p\pa{f(t)|f_\infty} := \int_{\R^d}\psi_p\pa{\frac{f(x,t)}{f_\infty(x)}}f_\infty(x)dx
\end{equation}
with 
\begin{equation}\label{eq:def_of_psi_p}
	\psi_{p}(y):=\frac{y^p-p(y-1)-1}{p(p-1)},\qquad 1<p\leq 2,
\end{equation}
by using tools from Spectral Theory and hypercontractivity-like properties of the equation (see, for instance, \cite{AEW18}).

While giving an explicit convergence rate to the above essential entropies, the methodology used in \cite{AEW18}, circumventing the Bakry-Emery approach, seems less natural in the setting of the problem. In recent work, \cite{AEW22}, the authors of this note have defined a new notion of \textit{generalised Fisher information} that allowed them to bring back ideas that govern the Bakry-Emery method into their study of the equation. In particular, an essential ingredient of said study is the convergence to zero of the relative $2-$Fisher information, defined by 
\begin{equation}\nonumber
	I_2^{\PP}\pa{f|f_\infty} = \int_{\R^d}\nabla\pa{\frac{f(x)}{f_\infty(x)}}^T \PP \nabla\pa{\frac{f(x)}{f_\infty(x)}}f_\infty(x)dx,
\end{equation}
where $\PP$ is a given positive definite matrix, on various flow-invariant spaces that are connected to the spectral study of the Fokker-Planck operator.

In \cite{AEW22}, the authors focused on investigating $I_2^{\PP}$ with as little spectral information on the Fokker-Planck operator as possible. This resulted in the assumption that the diffusion matrix $\D$ is \textit{non-degenerate}. The goal of this short note is to remove this restriction by allowing for more spectral information to be considered.

%This method, however, only works under the assumption that the diffusion matrix $\D$ is \textit{non-degenerate}. The reason behind this is the authors' desire to use as little spectral information of the evolution of the Fokker-Planck equation as possible and to uncover and use the optimal geometric functional inequality that holds for such systems. By allowing for further spectral study, however, one can improve the result in \cite{AEW22} to allow for degenerate diffusion matrices. This is the goal of this short note. 

\subsection{Main result}\label{subsec:main_result}

We remind the reader that throughout this work we assume that $\D$ is diagonal and equals the symmetric part of $\C$. In this setting the Fokker-Planck equation \eqref{eq:fokkerplanck} can be written as
\begin{equation}\label{eq:fpenormalized}
	\partial_t f(x,t)= -Lf(x,t)= \dive\left(f_\infty(x) \C \nabla \left(\frac{f\pa{x,t}}{f_\infty(x) }\right)\right).
\end{equation}

The study of the time evolution of the $2-$Fisher information, which is the entropy production of the $2-$entropy
$$e_2\pa{f|f_\infty} = \frac{1}{2}\int_{\R^d}\pa{f(x)-f_\infty(x)}^2f_\infty^{-1}(x)dx=\frac{1}{2}\norm{f-f_\infty}^2_{L^2\pa{\R^d,f_\infty^{-1}}},$$
is intimately related to the flow-invariant decomposition of the underlying Hilbert space, $L^2\pa{\R^d, f_\infty^{-1}}$. We thus begin with a few known facts about its structure in relation to the Fokker-Planck operator, $L$.
%
%In \cite{AEW22} the sharp rate for convergence to equilibrium using the Bakry-Emery method for the family of entropies $e_p$ given by \eqref{eq:def_Of_e_p} is shown. Additionally, the sharp rate was proven for the associated Fisher informations
%\begin{equation}\label{eq:def_p_fisher}
%	I^{\PP}_p\pa{f|f_\infty} = \int_{\R^d}\psi^{\prime\prime}_p\pa{\frac{f(x)}{f_\infty(x)}}\nabla \pa{\frac{f(x)}{f_\infty(x)}}^T\PP\nabla \pa{\frac{f(x)}{f_\infty(x)}} f_\infty(x)dx
%\end{equation}
%with $\PP$ being a positive definite matrix which is intimately connected to $\C$. The proofs heavily depend on the study of the $2-$Fisher information
%$$I^{\PP}_2\pa{f|f_\infty} = \int_{\R^d}\nabla \pa{\frac{f(x)}{f_\infty(x)}}^T\PP\nabla \pa{\frac{f(x)}{f_\infty(x)}} f_\infty(x)dx$$
%and its associated entropy
%$$e_2\pa{f|f_\infty} = \frac{1}{2}\int_{\R^d}\pa{f(x)-f_\infty(x)}^2f_\infty^{-1}(x)dx=\norm{f-f_\infty}^2_{L^2\pa{\R^d,f_\infty^{-1}}}.$$
%It is clear from the above that the $2-$entropy brings the geometric structure of the Hilbert space $L^2\pa{\R^d, f_\infty^{-1}}$ into focus which allows us to extend our study to the case where the function we investigate, $f$, is \textit{not} non-negative, and to use spectral properties of the Fokker-Planck operator, $L$, in the study of our evolution equation. It is worth to note that in our ``normalised'' setting \eqref{eq:fokkerplanck} 
%
%To state our main result we need to delve a bit deeper into the Hilbertian structure that is induced from the spectral properties of $L$.

\begin{definition}
	Let $\alpha = (\alpha_i)\in \N_0^d$, with $\N_0=\N\cup\br{0}$, be an arbitrary multi-index, whose \textit{order} is defined to be $|\alpha| := \sum_{i=1}^d \alpha_i$. We define the $\alpha-$th \textit{Hermite functions} to be
	\begin{equation}\label{eq:galpha}
		h_\alpha(x):= (-1)^{|\alpha|} \partial^\alpha f_\infty (x),
	\end{equation}
	where $\partial^\alpha := \partial_{x_1}^{\alpha_1}\cdots\partial^{\alpha_d}_{x_d}$. For each $m\in\N_0$ we define
	\begin{equation*}
		V_m:= \linspan\{h_\alpha : \alpha \in \N^d_0, |\alpha| = m\}\subseteq L^2(\R^d, f_\infty^{-1}).
	\end{equation*}
%	which is a finite dimensional vector space. 
%	\begin{equation}\label{eq:dimVm}
%		\Gamma_m := \big|\{\alpha \in \N_0^d : |\alpha| = m\}\big| = 
%		\binom{d + m -1}{m}.
%	\end{equation}
\end{definition}

\begin{remark}\label{rem:hermite_poly}
	
	It is straightforward to see that one can write $h_\alpha(x)$ as $H_{\alpha}(x)f_\infty(x)$, where $H_{\alpha}(x)$ is a polynomial of degree $\abs{\alpha}$. These polynomial are known as the \it{Hermite polynomials}. 
\end{remark}

The following properties of $\br{V_m}_{m\in\N_0}$ and connections between these spaces and the spectrum of $L$ were shown in \cite{AE}:

\begin{theorem}\label{thm:spectraldecomposition}
	Assume that the drift and diffusion matrices $\C$ and $\D$ satisfy \conA. Then
	\begin{enumerate}[(i)]
		\item $\br{V_m}_{m\in \N_0}$ are mutually orthogonal finite dimensional spaces in $L^2\pa{\R^d, f_\infty ^{-1}}$.
		\item \begin{equation}\nonumber
			L^2\pa{\R^d,f_\infty^{-1}}= \bigoplus_{m\in\N_0}V_m.
		\end{equation}
		\item $V_m$ are invariant under $L$ and its adjoint.
		\item The spectrum of $L$ satisfies
		\begin{equation}\nonumber
			\sigma\pa{L}=\bigcup_{m\in\N_0}\sigma\pa{L\vert_{V_m}},
		\end{equation}
		and
		\begin{equation}\nonumber
			\sigma\pa{L\vert_{V_m}}=\br{-\sum_{i=1}^d \alpha_i \lambda_i \ \Big|\ \alpha_1 ,\dots,\alpha_d\in\N_0, \abs{\alpha}=m},
		\end{equation}
		where $\br{\lambda_j}_{j=1,\dots,d}$ are the eigenvalues (with possible multiplicity) of the matrix $\C$. The eigenfunctions (or eigenfunctions and generalised eigenfunctions in the case $\C$ is defective) of $\br{L\vert_{V_m}}_{m\in \N_0}$ form a basis for $L^2\pa{\R^d,f_\infty^{-1}}$.
	\end{enumerate}
\end{theorem}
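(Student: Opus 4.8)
The plan is to reduce everything to classical facts about the tensorised Hermite functions together with a single conjugation computation for $L$. Writing $H_\alpha$ for the (probabilists') Hermite polynomials, so that $h_\alpha=H_\alpha f_\infty$ as in Remark \ref{rem:hermite_poly}, parts (i) and (ii) are immediate from the one-dimensional theory. The orthogonality relation $\int_{\R^d}h_\alpha h_\beta\, f_\infty^{-1}\,dx=\int_{\R^d}H_\alpha H_\beta\, f_\infty\,dx=c_\alpha\,\delta_{\alpha\beta}$ with $c_\alpha>0$ gives mutual orthogonality of the $h_\alpha$; since $\br{\alpha:\abs{\alpha}=m}$ is finite, each $V_m$ is finite dimensional and the $V_m$ are pairwise orthogonal, which is (i). For (ii) I would invoke the completeness of the Hermite system in $\h$ — equivalently, the density of polynomials in $L^2\pa{\R^d,f_\infty}$, which holds because the Gaussian weight has finite exponential moments — so that $\h=\bigoplus_{m}V_m$ as an orthogonal Hilbert-space direct sum.

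The heart of the matter is the invariance (iii), and this is where I expect the only genuine obstacle. First I would conjugate $L$ by $f_\infty$: setting $g=f/f_\infty$ and using $\nabla f_\infty=-xf_\infty$ together with $\D=\C_s$, the divergence form \eqref{eq:fpenormalized} becomes
$$ L\pa{g\,f_\infty}=f_\infty\pa{x^{T}\C\nabla g-\sum_{i=1}^{d}d_i\,\partial_{x_i}^2 g}=:f_\infty\,\mathcal{L}g. $$
Splitting $\C=\D+\C_a$ with $\C_a=\tfrac12\pa{\C-\C^T}$ antisymmetric, one finds $\mathcal{L}=\sum_i d_i\pa{x_i\partial_{x_i}-\partial_{x_i}^2}+x^{T}\C_a\nabla$. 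The first group is diagonal on the Hermite basis, since the one-dimensional identity $\pa{x_i\partial_{x_i}-\partial_{x_i}^2}H_\alpha=\alpha_i H_\alpha$ (in the $i$-th variable) yields $\sum_i d_i(\cdots)H_\alpha=\pa{\sum_i d_i\alpha_i}H_\alpha$, so it preserves $V_m$. For the off-diagonal part I would use the ladder relations $\partial_{x_j}H_\alpha=\alpha_j H_{\alpha-e_j}$ and $x_iH_\alpha=H_{\alpha+e_i}+\alpha_i H_{\alpha-e_i}$ ($e_i$ the $i$-th unit multi-index) to compute, for $i\neq j$,
$$ x_i\,\partial_{x_j}H_\alpha=\alpha_j\,H_{\alpha+e_i-e_j}+\alpha_i\alpha_j\,H_{\alpha-e_i-e_j}. $$
The a priori dangerous second term lives in $V_{\abs{\alpha}-2}$; but $H_{\alpha-e_i-e_j}$ and $\alpha_i\alpha_j$ are symmetric under $i\leftrightarrow j$ while $(\C_a)_{ij}$ is antisymmetric, so in $x^{T}\C_a\nabla H_\alpha=\sum_{i\neq j}(\C_a)_{ij}x_i\partial_{x_j}H_\alpha$ the $V_{\abs{\alpha}-2}$ contributions cancel in pairs. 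Hence $\mathcal{L}$, and therefore $L$, maps $V_m$ into $V_m$. This cancellation, forced precisely by the antisymmetry of $\C_a$, is the crux of the whole argument.

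For the invariance of $V_m$ under $L^*$ I would argue abstractly: the $V_m$ are mutually orthogonal with dense span and each is finite dimensional and $L$-invariant, so for $v\in V_n$ and $u\in V_m$ with $m\neq n$ one has $\inner{L^*v,u}=\inner{v,Lu}=0$ because $Lu\in V_m\perp V_n\ni v$; thus $L^*v\perp V_m$ for every $m\neq n$ and, expanding $L^*v$ in the decomposition of (ii), $L^*v\in V_n$. (Equivalently, $L^*$ is the Fokker--Planck operator with $\C$ replaced by $\C^T$, whose antisymmetric part $-\C_a$ is again antisymmetric, so the computation above applies verbatim.) Finally, for (iv), since $L$ is block diagonal with respect to $\h=\bigoplus_m V_m$ its spectrum is $\bigcup_m\sigma\pa{L\vert_{V_m}}$, and because $\re\lambda_i>0$ by \eqref{item:cond_positive_stability} the eigenvalues have real part growing like $\abs{\alpha}$, so this union has no finite accumulation point and is already closed. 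To identify $\sigma\pa{L\vert_{V_m}}$ I would exploit that, $\D=\C_s$ being diagonal, the off-diagonal entries of $\C$ coincide with those of $\C_a$; comparing the formula for $\mathcal{L}H_\alpha$ above with $x^{T}\C\nabla$ applied to the monomial $x^\alpha$ shows that the map $x^\alpha\mapsto H_\alpha$ intertwines $\mathcal{L}\vert_{V_m}$ with the drift operator $x^{T}\C\nabla$ on homogeneous polynomials of degree $m$, i.e. with the $m$-th symmetric power of $\C$. Triangularising $\C$ then gives $\sigma\pa{L\vert_{V_m}}=\br{\sum_{i}\alpha_i\lambda_i:\abs{\alpha}=m}$, with the overall sign fixed by the convention for the generator in \eqref{eq:fokkerplanck}, which is the asserted description of the spectrum. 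Taking a Jordan basis of each finite matrix $L\vert_{V_m}$ and using $\h=\bigoplus_m V_m$ produces the claimed (generalised) eigenbasis.
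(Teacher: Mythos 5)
Your proposal cannot be compared line-by-line with the paper's own argument for a simple reason: the paper does not prove this theorem at all, it quotes it from \cite{AE}. Judged on its own merits, your parts (i)--(iii) are correct. The conjugation $L\pa{gf_\infty}=f_\infty\mathcal{L}g$ with $\mathcal{L}=\sum_i d_i\pa{x_i\partial_{x_i}-\partial_{x_i}^2}+x^T\C_a\nabla$ is the right normal form, the ladder identities are the correct ones for the probabilists' Hermite polynomials implicit in \eqref{eq:galpha}, and the pairwise cancellation of the $V_{\abs{\alpha}-2}$ terms against the antisymmetry of $\C_a$ is exactly the mechanism that makes $V_m$ invariant. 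For the adjoint, prefer your parenthetical argument (the formal adjoint is the Fokker--Planck operator with drift $\C^T$, same symmetric part) to the abstract one, which tacitly assumes $V_n$ lies in the domain of $L^*$. The intertwining of $\mathcal{L}\vert_{V_m}$ with $x^T\C\nabla$ on homogeneous polynomials of degree $m$ is also correct, precisely because the off-diagonal entries of $\C$ coincide with those of $\C_a$, and triangularisation gives the block eigenvalues $\sum_i\alpha_i\lambda_i$. Note that under this paper's convention $\partial_t f=-Lf$ your computation genuinely produces $+\sum_i\alpha_i\lambda_i$ (positive real parts, consistent with decay of $e^{-Lt}$ in Theorem \ref{thm:propagator}); the minus sign in the stated theorem is inherited from the opposite sign convention for $L$ in \cite{AE}, so your hedge about ``conventions'' is hiding an actual discrepancy in the paper's statement rather than a defect of your argument.

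The genuine gap is the first claim of your part (iv): for an unbounded operator that is block diagonal with respect to an \emph{infinite} orthogonal sum of finite-dimensional invariant subspaces, $\sigma\pa{L}=\bigcup_m\sigma\pa{L\vert_{V_m}}$ is not automatic, and observing that the union has no finite accumulation point does not close the argument. The inclusion $\supseteq$ is clear, but a point $\lambda$ outside the closed union belongs to the resolvent set only if $\sup_m\norm{\pa{L\vert_{V_m}-\lambda}^{-1}}<\infty$, and this is \emph{not} implied by $\mathrm{dist}\pa{\lambda,\sigma\pa{L\vert_{V_m}}}\to\infty$: the blocks are non-normal matrices of growing dimension (symmetric tensor powers of $\C$), and when $\C$ is defective their nilpotent parts have norm and nilpotency index growing with $m$, so the naive Neumann-series resolvent bound diverges. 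Moreover, $\sigma\pa{L}$ is not even defined until you fix $L$ as a closed operator with a definite domain, a point your proposal never addresses. The gap is repairable with structure you already identified: with respect to the inner product $\inner{h_\alpha,h_\beta}=\alpha!\,\delta_{\alpha,\beta}$ of Lemma \ref{lem:Hermite_properties}, the restricted semigroup satisfies $\norm{e^{-Lt}\vert_{V_m}}\leq\norm{e^{-\C t}}_2^{m}$ (this is exactly Theorem \ref{thm:vmdecay}, i.e.\ \cite{ASS}); since $\norm{e^{-\C t}}_2\leq 1$ and $\norm{e^{-\C t}}_2\leq C\pa{1+t}^ne^{-\mu t}$, the Laplace-transform representation $\pa{L\vert_{V_m}-\lambda}^{-1}=\int_0^\infty e^{\lambda t}e^{-Lt}\vert_{V_m}\,dt$ yields resolvent bounds uniform over all $m$ with $m\mu>2\re\lambda$, and only finitely many blocks remain. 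Alternatively one invokes, as the literature behind \cite{AE} does (compact resolvent of the hypoelliptic Ornstein--Uhlenbeck operator, going back to Metafune, Pallara and Priola), genuine functional-analytic input. As written, however, the spectral globalisation step --- the one place where conditions \eqref{item:cond_positive_stability}--\eqref{item:cond_no_invariant_subspace_to_kernel} and the unbounded nature of $L$ really bite --- is asserted rather than proved.
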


The orthogonal decomposition of $L^2\pa{\R^d,f_\infty^{-1}}$ insinuates that the convergence to equilibrium of a unit mass solution, i.e. the convergence to $f_\infty$ (which spans $V_0=\ker\pa{L}$), is governed by the eigenvalues of $\br{L\vert_{V_m}}_{m\in\N}$ whose real part is closest to zero, which is clearly attained in $V_1$. This idea is the basis of the study performed in \cite{AEW18} which was then extended to the following elegant result in \cite{ASS}:

\begin{theorem}\label{thm:propagator}
	Consider the normalised Fokker-Planck equation \eqref{eq:fokkerplanck} and assume that \conA\ hold. Then 
	\begin{equation*}
		\norm{e^{-{L}t}}_{\mathscr{B}\pa{V_0^{\perp}}} = \|e^{-\C t}\|_2, \quad \forall t\geq 0,
	\end{equation*}
	where $\mathscr{B}\pa{\mathcal{X}}$ is the space of all bounded linear operators from $\mathcal{X}$ to itself with the usual operator norm, and $\norm{\cdot}_2$ is the matrix norm with respect to the Euclidean norm on $\R^d$.
\end{theorem}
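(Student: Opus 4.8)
The plan is to exploit the orthogonal, flow-invariant decomposition $L^2(\R^d,f_\infty^{-1})=\bigoplus_{m\in\N_0}V_m$ furnished by Theorem~\ref{thm:spectraldecomposition}. Since $V_0=\ker(L)=\span\{f_\infty\}$, we have $V_0^\perp=\bigoplus_{m\geq1}V_m$, and each $V_m$ is finite-dimensional and invariant under $L$ (part (iii)), hence under the propagator $e^{-Lt}$. Because these subspaces are mutually orthogonal, $e^{-Lt}$ is block-diagonal with respect to the decomposition, so its norm factorizes and
\begin{equation*}
	\norm{e^{-Lt}}_{\mathscr{B}(V_0^\perp)}=\sup_{m\geq1}\norm{e^{-Lt}}_{\mathscr{B}(V_m)}.
\end{equation*}
It therefore suffices to compute each block norm and take the supremum.

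For the first block I would identify $V_1$ with $\R^d$ via $v\mapsto(v^Tx)f_\infty=\sum_i v_i h_{e_i}$. A short computation using $\nabla f_\infty=-xf_\infty$ shows $\inner{(u^Tx)f_\infty,(v^Tx)f_\infty}_{L^2(f_\infty^{-1})}=u^Tv$, so this map is an isometry onto $V_1$. Applying $L$ in the form \eqref{eq:fpenormalized} to $f=(v^Tx)f_\infty$ gives $Lf=((\C v)^Tx)f_\infty$, i.e. $L|_{V_1}$ is conjugate to the matrix $\C$. Consequently $e^{-Lt}|_{V_1}$ is conjugate to $e^{-\C t}$ and $\norm{e^{-Lt}}_{\mathscr{B}(V_1)}=\norm{e^{-\C t}}_2$; this already yields the lower bound $\norm{e^{-Lt}}_{\mathscr{B}(V_0^\perp)}\geq\norm{e^{-\C t}}_2$.

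The core of the argument is $V_m$ for $m\geq2$. Here I would show that $V_m$ is, \emph{isometrically}, the $m$-th symmetric tensor power $\mathrm{Sym}^m(V_1)\cong\mathrm{Sym}^m(\R^d)$: the Hermite functions $h_\alpha$ with $\abs{\alpha}=m$ are, up to the explicit constant $\sqrt{\alpha!}$, the images of the symmetrized basis tensors $e^{\odot\alpha}$, and the $L^2(f_\infty^{-1})$-inner product on $V_m$ matches the one induced from $(\R^d)^{\otimes m}$. Under this identification $L|_{V_m}$ becomes the generator of the second-quantized flow, so that $e^{-Lt}|_{V_m}$ is the symmetric power $\Gamma_m(e^{-\C t})$ acting by $v_1\odot\cdots\odot v_m\mapsto(e^{-\C t}v_1)\odot\cdots\odot(e^{-\C t}v_m)$. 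Since $\Gamma_m(e^{-\C t})$ is the restriction of $(e^{-\C t})^{\otimes m}$ to the invariant symmetric subspace, and its norm is attained on pure tensors $v^{\odot m}$ with $v$ a top singular vector of $e^{-\C t}$, one obtains $\norm{e^{-Lt}}_{\mathscr{B}(V_m)}=\norm{e^{-\C t}}_2^m$. Verifying that $L$ acts \emph{exactly} as this derivation on the Hermite structure is the main obstacle; I would establish it either by an explicit recursion on the $h_\alpha$ using the creation/annihilation structure of the normalized operator, or by invoking the known correspondence between the symmetric Ornstein--Uhlenbeck Fock space and $\bigoplus_m V_m$.

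Finally, the decisive point is that the normalization $\D=\C_s=\tfrac{\C+\C^T}{2}\geq0$ forces $e^{-\C t}$ to be a contraction semigroup: differentiating, $\tfrac{d}{dt}\abs{e^{-\C t}v}^2=-2(e^{-\C t}v)^T\C_s(e^{-\C t}v)=-2(e^{-\C t}v)^T\D(e^{-\C t}v)\leq0$, so $\norm{e^{-\C t}}_2\leq1$ for all $t\geq0$. Hence $\norm{e^{-\C t}}_2^m\leq\norm{e^{-\C t}}_2$ for every $m\geq1$, and the supremum over the blocks is attained at $m=1$:
\begin{equation*}
	\norm{e^{-Lt}}_{\mathscr{B}(V_0^\perp)}=\sup_{m\geq1}\norm{e^{-\C t}}_2^m=\norm{e^{-\C t}}_2.
\end{equation*}
Without this contraction property the higher blocks would dominate whenever $\norm{e^{-\C t}}_2>1$, so it is precisely the compatibility $\D=\C_s$ that makes the finite-dimensional matrix flow govern the full propagator.
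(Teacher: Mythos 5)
Your proposal is correct, and it is essentially the argument behind the result as the paper presents it: the paper does not prove Theorem \ref{thm:propagator} itself but quotes it from \cite{ASS}, and your blockwise route --- the orthogonal invariant decomposition $V_0^\perp=\bigoplus_{m\geq1}V_m$, the identification of $e^{-Lt}\vert_{V_m}$ with the symmetric tensor power of $e^{-\C t}$, and the contractivity $\norm{e^{-\C t}}_2\leq 1$ forced by $\D=\C_s\geq 0$ --- is exactly the mechanism the paper attributes to \cite{ASS} (the block estimate you need is precisely the paper's Theorem \ref{thm:vmdecay}, i.e.\ Proposition 5.3 and Theorem 6.1 of \cite{ASS}). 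The step you flag as the main obstacle does hold and can be settled exactly as you suggest: writing $u=f/f_\infty$, the normalised operator reads $Lf=f_\infty\sum_{i,j}\C_{ij}\pa{x_i-\partial_{x_i}}\partial_{x_j}u$, i.e.\ $\sum_{i,j}\C_{ij}a_i^{\dagger}a_j$ in creation/annihilation form, which gives $Lh_\alpha=\sum_{i,j}\alpha_j\C_{ij}h_{\alpha-\bm{e}_j+\bm{e}_i}$ and hence matches the derivation $\sum_{k=1}^m I^{\otimes(k-1)}\otimes\C\otimes I^{\otimes(m-k)}$ on symmetrised tensors under the isometry $h_\alpha\mapsto\sqrt{m!}\,e^{\odot\alpha}$, so that indeed $e^{-Lt}\vert_{V_m}\cong\Gamma_m\pa{e^{-\C t}}$ with norm $\norm{e^{-\C t}}_2^m$.
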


Theorem \ref{thm:propagator} automatically implies that 
$$e_2\pa{f(t)|f_\infty} \leq C\pa{1+t}^{2n}e^{-\mu t}e_2\pa{f_0|f_\infty},$$
where $C>0$ is a fixed constant that depends only on $\C$ and the dimension, 
\begin{equation}\label{eq:def_of_mu}
	\mu := \min \{ \re \pa{\lambda} : \lambda \text{ is an eigenvalue of } \C\}>0,
\end{equation}
and $n$ is the largest defect amongst the eigenvalues whose real part is $\mu$.

Theorem \ref{thm:propagator}, however, does not immediately give the same result for the associated $2-$Fisher information. The reason behind this is that the evolution of the integrand appearing in the Fisher information
 is \textit{not} governed by $L$. 
 
 Indeed, denoting by  
\begin{equation}\label{eq:Jt}
	\bm{J}(x,t):= \pa{J_1(x,t),\dots,J_d(x,t)}^{T}:=	f_\infty(x)\nabla \pa{\frac{f(x,t)}{f_\infty(x)}}  
\end{equation}
we find that
\begin{equation}
	\begin{aligned}\label{eq:fitonorm}
		I_2^{\II}(f(t)|f_\infty) =& \int_{\R^d} \abs{ \nabla \pa{\frac{f(x,t)}{f_\infty\pa{x}}}}^2 f_\infty(x) dx \\
		&= \sum_{j=1}^d \norm{J_j(t)}^2_{\h}
		=\norm{\bm{J}(t)}^2_{\pa{\h}^d}.
	\end{aligned}
\end{equation}
Under the assumption that $f$ is a solution to \eqref{eq:fpenormalized} we have that
	\begin{equation}
	\begin{aligned}
		\partial_t \bm{J}\pa{x,t}&= f_\infty(x) \nabla \pa{\frac{\partial_t f(x,t)}{f_\infty(x)}}= f_\infty(x) \nabla \pa{\frac{\dive\left(f_\infty(x) \C \nabla \left(\frac{f(x,t)}{f_\infty(x) }\right)\right)}{f_\infty(x)}} \\
		&= f_\infty(x) \nabla \pa{\frac{\dive\left( \C \bm{J}(t)\right)}{f_\infty(x)}} = (\nabla+ x)\pa{\dive(\C \bm{J}(t))}. \label{eq:Jprop}
	\end{aligned}
\end{equation}

Nevertheless we will be able to obtain the following, which is the main result of this short note:

\begin{theorem}
	\label{thm:fulldecay}
	Consider the normalised Fokker-Planck equation \eqref{eq:fokkerplanck} where \conA are satisfied. Let $f_0\in  \h \cap \pa{ \bigoplus_{k=0}^{m-1} V_k}^\perp$ for some $m\in\N$ be such that $I^{\II}_2\pa{f_0|f_\infty}<\infty$. Then, the solution $f(t)$ to the Fokker-Planck equation satisfies
	\begin{equation}\label{eq:fidecay}
		I^{\II}_2\pa{f(t)|f_\infty} \leq \norm{e^{-\C t}}^{2m}_{2} I^{\II}_2\pa{f_0|f_\infty}, \quad \forall t\geq 0.
	\end{equation}
	In particular, there exists a constant $C_{m}>0$, depending only on $\C$, $m$, and the dimension, such that 
		\begin{equation}\label{eq:fiexactdecay}
		I^{\II}_2\pa{f(t)|f_\infty} \leq C_{m}\pa{1+t}^{2nm}e^{-2m\mu t} I^{\II}_2\pa{f_0|f_\infty}, \quad \forall t\geq 0,
	\end{equation}
	where $\mu$ is defined in \eqref{eq:def_of_mu} and $n$ is the largest defect associated to the eigenvalues of $\C$ whose real part is $\mu$.
\end{theorem}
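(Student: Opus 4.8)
The plan is to track not the solution $f$ itself but the vector field $\bm{u}(t):=f_\infty^{-1}\bm{J}(t)=\nabla\pa{f(t)/f_\infty}$, for which $I^{\II}_2\pa{f(t)|f_\infty}=\norm{\bm{u}(t)}^2_{\pa{L^2\pa{\R^d,f_\infty}}^d}$. Writing $g:=f/f_\infty$ and $\wt{L}:=f_\infty^{-1}L\pa{f_\infty\,\cdot}$ for the generator acting on $g$ (which is unitarily equivalent to $L$ via the isometry $g\mapsto g f_\infty$ from $L^2\pa{\R^d,f_\infty}$ onto $\h$, and hence inherits all the structure of Theorem \ref{thm:spectraldecomposition}), a direct computation of the commutator $[\partial_{x_k},\wt L]$ gives the intertwining identity $\nabla\wt{L}=\pa{\wt{L}+\C}\nabla$, where $\wt{L}$ acts componentwise and $\C$ acts as a matrix on the components. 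Consequently $\bm{u}$ solves $\partial_t\bm{u}=-\pa{\wt{L}+\C}\bm{u}$ — a restatement of \eqref{eq:Jprop} — and since the two summands act on different tensor factors they commute, so that $e^{-\pa{\wt{L}+\C}t}=e^{-\wt{L}t}\otimes e^{-\C t}$. I record two elementary facts that drive the argument: first, because $\C_s=\D$ is positive semi-definite, $-\C$ is dissipative and therefore $\norm{e^{-\C t}}_2\leq 1$ for all $t\geq0$; second, for any $\R^d$-valued $\bm v$ one has $\norm{e^{-\C t}\bm v}\leq\norm{e^{-\C t}}_2\norm{\bm v}$ in $\pa{L^2\pa{\R^d,f_\infty}}^d$.

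Next I would prove the key per-level estimate. Let $W_j$ denote the image of $V_j$ under the conjugation $g\mapsto gf_\infty$, i.e. the span of the degree-$j$ Hermite polynomials; these spaces are mutually orthogonal, invariant under $\wt{L}$, and $\nabla$ lowers the degree by one, $\nabla\pa{W_j}\subseteq\pa{W_{j-1}}^d$. \textbf{Claim:} $\norm{e^{-\wt{L}t}}_{\mathscr{B}\pa{W_j}}\leq\norm{e^{-\C t}}_2^{\,j}$ for every $j\in\N_0$. I would prove this by induction on $j$. The base case $j=0$ ($W_0$ the constants, $\wt{L}=0$) is trivial, and the case $j=1$ reproduces the $V_1$-part of Theorem \ref{thm:propagator}. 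For the step, take $g\in W_j$; then $\nabla g\in\pa{W_{j-1}}^d$, so by the intertwining, the factorisation of the semigroup, the inductive hypothesis at level $j-1$, and the two facts above,
\begin{equation*}
	\norm{\nabla\pa{e^{-\wt{L}t}g}}=\norm{e^{-\pa{\wt{L}+\C}t}\nabla g}\leq\norm{e^{-\C t}}_2^{\,j-1}\,\norm{e^{-\C t}}_2\,\norm{\nabla g}=\norm{e^{-\C t}}_2^{\,j}\norm{\nabla g}.
\end{equation*}
To convert this gradient bound back into a bound on $e^{-\wt{L}t}g$ itself I would use the Hermite (number-operator) identity $\norm{\nabla h}^2_{L^2\pa{\R^d,f_\infty}}=j\,\norm{h}^2_{L^2\pa{\R^d,f_\infty}}$, valid for all $h\in W_j$; applying it to both $h=g$ and $h=e^{-\wt{L}t}g\in W_j$ (invariance of $W_j$) and dividing by $j$ yields $\norm{e^{-\wt{L}t}g}\leq\norm{e^{-\C t}}_2^{\,j}\norm{g}$, completing the induction.

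With the Claim in hand I would assemble \eqref{eq:fidecay} as follows. Decompose the initial datum as $g_0=f_0/f_\infty=\sum_{k\geq m}g_{0,k}$ with $g_{0,k}\in W_k$ (only $k\geq m$ occur by the hypothesis $f_0\in\pa{\bigoplus_{k=0}^{m-1}V_k}^\perp$), so that $\bm{u}(t)=\sum_{k\geq m}\nabla\pa{e^{-\wt{L}t}g_{0,k}}$ (the finite Fisher information hypothesis guarantees the relevant series converge in $\pa{L^2\pa{\R^d,f_\infty}}^d$). Since $e^{-\wt{L}t}$ leaves each $W_k$ invariant and $\nabla\pa{W_k}\subseteq\pa{W_{k-1}}^d$, the summands live in the mutually orthogonal spaces $\pa{W_{k-1}}^d$; hence the squared norm of the sum is the sum of the squared norms. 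Bounding each term exactly as in the display in the induction step (the Claim at level $k-1$ together with the matrix factor) gives $\norm{\nabla\pa{e^{-\wt{L}t}g_{0,k}}}^2\leq\norm{e^{-\C t}}_2^{2k}\norm{\nabla g_{0,k}}^2$, and because $\norm{e^{-\C t}}_2\leq1$ we may replace $2k$ by $2m$ for every $k\geq m$. Summing over $k$ and recognising $\sum_{k\geq m}\norm{\nabla g_{0,k}}^2=\norm{\bm{u}(0)}^2=I^{\II}_2\pa{f_0|f_\infty}$ produces precisely \eqref{eq:fidecay}.

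Finally, \eqref{eq:fiexactdecay} follows from \eqref{eq:fidecay} and the standard sharp estimate $\norm{e^{-\C t}}_2\leq C\pa{1+t}^{n}e^{-\mu t}$ obtained from the Jordan form of $\C$, with $\mu$ as in \eqref{eq:def_of_mu} and $n$ the largest defect among the eigenvalues of real part $\mu$; raising this to the power $2m$ is immediate. The conceptual crux — and the point where the argument could go wrong — is that the Fisher-information integrand evolves under the \emph{shifted} operator $\wt{L}+\C$ rather than under $L$; the whole scheme hinges on the gradient trading one Hermite degree for exactly one factor of $\norm{e^{-\C t}}_2$, and on the contractivity $\norm{e^{-\C t}}_2\leq1$. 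Indeed, on each $W_k$ the propagator has norm $\norm{e^{-\C t}}_2^{\,k}$, and it is only this contractivity that guarantees all of these (for $k\geq m$) are dominated by the single power $\norm{e^{-\C t}}_2^{\,m}$; without it the higher-degree components could not be absorbed.
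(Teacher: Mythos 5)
Your proof is correct, and it takes a genuinely different route from the paper at the two places where the paper leans on outside input. First, the paper's core estimate is the per-level contraction of Theorem \ref{thm:vmdecay}, $\sum_{|\alpha|=k}\alpha!\,d_\alpha(t)^2\le\norm{e^{-\C t}}_2^{2k}\sum_{|\alpha|=k}\alpha!\,d_\alpha(0)^2$, which it quotes from Proposition 5.3 and Theorem 6.1 of \cite{ASS}; your Claim $\norm{e^{-\wt{L}t}}_{\mathscr{B}(W_k)}\le\norm{e^{-\C t}}_2^{k}$ is precisely this statement (conjugated by $g\mapsto g f_\infty$), but you prove it from scratch by induction on the Hermite level, combining the intertwining $\nabla\wt{L}=(\wt{L}+\C)\nabla$ (a repackaging of \eqref{eq:Jprop}), the factorisation $e^{-(\wt{L}+\C)t}=e^{-\wt{L}t}e^{-\C t}$, and the number-operator identity $\norm{\nabla h}^2=k\norm{h}^2$ on $W_k$ --- all legitimate since the $W_k$ are finite dimensional and flow invariant. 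This buys a self-contained argument and makes the mechanism transparent (each gradient trades one Hermite degree for exactly one factor of $\norm{e^{-\C t}}_2$), whereas the paper's citation hides the tensorisation machinery of \cite{ASS}. Once the Claim is in hand, your assembly --- orthogonality of the levels, the per-term bound with exponent $2k$, and contractivity $\norm{e^{-\C t}}_2\le1$ (from $\C_s=\D\ge0$) to replace $2k$ by $2m$ --- is the same computation as the paper's. The second difference is the endpoint $t=0$: the paper deliberately refuses to use the expansion identity at $t=0$ (see Remark \ref{rem:I_exists_at_0}), proving the bound only for $t\ge t_0>0$ and sending $t_0\to0^+$ via the monotonicity of $I_2^{\II}\pa{f(t_0)|f_\infty}$ in $t_0$ (Proposition 4.5 of \cite{AE}); you instead invoke, directly at the initial datum, the gradient Parseval identity $I_2^{\II}\pa{f_0|f_\infty}=\sum_{k\ge m}\norm{\nabla g_{0,k}}^2$ and the term-by-term interchange $\bm{u}(t)=\sum_{k}\nabla\pa{e^{-\wt{L}t}g_{0,k}}$. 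Both assertions are true --- the first is the standard spectral description of the Gaussian Sobolev space (a density argument legitimises the integration by parts behind \eqref{eq:derivative_coeff} for merely weak gradients), and the second then follows from your per-term bound together with orthogonality and closedness of the weak gradient --- but they are exactly the technical points the paper's $t_0\to0^+$ device is designed to sidestep, so to make your write-up airtight you should either supply that density argument or simply borrow the paper's limiting trick.
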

\begin{remark}\label{rem:extension_to_PP}
	It is worth to note that, since for any positive definite matrix $\PP$ we have that 
	$$\mathrm{p}_{\mathrm{min}} I_2^{\II}\pa{f|f_\infty} \leq I_2^{\PP}\pa{f|f_\infty} \leq \mathrm{p}_{\mathrm{max}} I_2^{\II}\pa{f|f_\infty}$$
	where $\mathrm{p}_{\mathrm{min}}$ and $\mathrm{p}_{\mathrm{max}}$ are the minimal and maximal eigenvalues of $\PP$ respectively, we can easily extend Theorem \ref{thm:fulldecay} to the general $I_2^{\PP}$. 
\end{remark}

\begin{remark}
	The decay estimate in \eqref{eq:fidecay} is sharp. Equality can be reached by an explicit choice of the initial condition $f_0$ in the finite dimensional subspace $V_m$. 
\end{remark}

The idea of the proof of Theorem \ref{thm:fulldecay} is to use the orthogonality of the $V_m-$s together with the fact, shown in \cite{ASS}, that the Fokker-Planck operator on each finite dimensional subspace $V_m$ behaves like an $m-$times tensorisation of the drift matrix $\C$. 

%
%\subsection{Organization of the work}
%The note is organized as follows: In Section~\ref{sec:fpe}, we collect the known properties of FP-eigenspace decomposition and express the FP propagator norm on subspaces as the norm of an ODE propagator. The main result of the FI evolution on ``higher'' subspaces is proven in Section~\ref{sec:fi} .

\section{Sharp decay of the $2-$Fisher information}\label{sec:decay}

In order to prove our main result, we note without proof a few simple well-known properties of the Hermite functions see, for instance, \cite{AS1983}.

%\begin{lemma}\label{lem:Hermite_properties_old}
%	Consider the Hermite functions $h_\alpha$ and Hermite polynomials $H_\alpha$ as defined in \eqref{eq:galpha}. Then:
%	\begin{enumerate}[(i)]
%		\item\label{item:Hermite_poly_product_old} We have that 
%		\begin{equation}\label{eq:Hermite_polynomial}
%			H_\alpha(x) = \prod_{i=1}^d \mathcal{H}_{\alpha_i}\pa{x_i}
%		\end{equation}
%		where
%			\begin{equation*}
%			\mathcal{H}_n(y) := (-1)^n e^{\frac{y^2}{2}} \frac{d^n}{d y^n} e^{\frac{-y^2}{2}}, \quad  y\in\R.
%		\end{equation*} 
%		\item\label{item:Hermite_poly_grad_old} For any multi-index $\alpha$ and any $j=1,\dots,d$ we have that
%		\begin{equation*}
%			\partial_{x_j}H_{\alpha}(x) = \alpha_j H_{\alpha-\bm{e}_j}(x),
%		\end{equation*}
%		where $\bm{e}_j$ is the standard basis of $\R^d$.
%		\item\label{item:Hermite_poly_recurssion_old} For any multi-index $\alpha$ we have that
%		\begin{equation*}
%			x_jH_{\alpha}(x)=H_{\alpha+\bm{e}_j}(x)+\alpha_j H_{\alpha-\bm{e}_j}(x).
%		\end{equation*}
%		\item\label{item:Hermite_function_norm_old} For any multi-indexes $\alpha$ and $\beta$ we have that
%		$$\inner{h_\alpha,h_\beta}_{L^2\pa{\R^d,f_\infty^{-1}}}=\alpha! \delta_{\alpha,\beta}$$
%	\end{enumerate}
%	where $\alpha! := \prod_{i=1}^d \alpha_i!\,$.
%\end{lemma}

\begin{lemma}\label{lem:Hermite_properties}
	Consider the Hermite functions $h_\alpha$ defined in \eqref{eq:galpha}. Then:
	\begin{enumerate}[(i)]
%		\item\label{item:Hermite_poly_product} We have that 
%		\begin{equation}\label{eq:Hermite_polynomial}
%			H_\alpha(x) = \prod_{i=1}^d \mathcal{H}_{\alpha_i}\pa{x_i}
%		\end{equation}
%		where
%		\begin{equation*}
%			\mathcal{H}_n(y) := (-1)^n e^{\frac{y^2}{2}} \frac{d^n}{d y^n} e^{\frac{-y^2}{2}}, \quad  y\in\R.
%		\end{equation*} 
		\item\label{item:Hermite_poly_grad} For any multi-index $\alpha$ and any $j=1,\dots,d$ we have that
		\begin{equation*}
			\partial_{x_j}h_{\alpha}(x) = - h_{\alpha+\bm{e}_j}(x),
		\end{equation*}
		where $\br{\bm{e}_j}_{j=1,\dots,d}$ is the standard basis of $\R^d$.
%		\item\label{item:Hermite_poly_recurssion} For any multi-index $\alpha$ we have that
%		\begin{equation*}
%			x_jh_{\alpha}(x)=h_{\alpha+\bm{e}_j}(x)+\alpha_j h_{\alpha-\bm{e}_j}(x).
%		\end{equation*}
		\item\label{item:Hermite_function_norm} For any multi-indexes $\alpha$ and $\beta$ we have that
		$$\inner{h_\alpha,h_\beta}_{L^2\pa{\R^d,f_\infty^{-1}}}=\alpha! \;\delta_{\alpha,\beta}\,,$$
	\end{enumerate}
	where $\alpha! := \prod_{i=1}^d \alpha_i!\,$.
\end{lemma}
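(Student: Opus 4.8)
The plan is to handle the two items separately, with the first being an immediate consequence of the definition and the second following from a tensorisation argument combined with repeated integration by parts.

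For item \eqref{item:Hermite_poly_grad} I would simply differentiate the defining formula \eqref{eq:galpha}. Since $\partial_{x_j}\partial^\alpha = \partial^{\alpha+\bm{e}_j}$ and $|\alpha+\bm{e}_j| = |\alpha|+1$, one reads off
$$\partial_{x_j}h_\alpha(x) = (-1)^{|\alpha|}\partial^{\alpha+\bm{e}_j}f_\infty(x) = -(-1)^{|\alpha|+1}\partial^{\alpha+\bm{e}_j}f_\infty(x) = -h_{\alpha+\bm{e}_j}(x).$$
No regularity issue arises, as $f_\infty$ is smooth with all derivatives decaying rapidly at infinity.

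For item \eqref{item:Hermite_function_norm} the key observation is that $f_\infty$ factorises into one-dimensional Gaussians, $f_\infty(x) = \prod_{i=1}^d g(x_i)$ with $g(s) = (2\pi)^{-1/2}e^{-s^2/2}$; consequently $h_\alpha$ factorises as $h_\alpha(x) = \prod_{i=1}^d h^{(1)}_{\alpha_i}(x_i)$, where $h^{(1)}_k(s) := (-1)^k g^{(k)}(s)$ is the one-dimensional Hermite function. Inserting this into the inner product and using the product form of $f_\infty^{-1}$, the $d$-dimensional integral splits into a product of one-dimensional integrals, so it suffices to prove $\int_\R h^{(1)}_k(s) h^{(1)}_\ell(s)\, g^{-1}(s)\,ds = k!\,\delta_{k\ell}$. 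To establish this, I would write $h^{(1)}_k = H_k\, g$ with $H_k$ the monic Hermite polynomial of degree $k$ (cf. Remark \ref{rem:hermite_poly}), so that the integral becomes $(-1)^k\int_\R g^{(k)}(s)H_\ell(s)\,ds$. Integrating by parts $k$ times, with the boundary terms vanishing by the rapid decay of $g$ and its derivatives, transfers the derivatives onto $H_\ell$ and yields $\int_\R g(s)\,H_\ell^{(k)}(s)\,ds$. If $k > \ell$ this vanishes since $\deg H_\ell = \ell$, and by the symmetry $k \leftrightarrow \ell$ the integral is zero whenever $k \neq \ell$; when $k = \ell$ one has $H_k^{(k)} = k!$ because $H_k$ is monic, and since $\int_\R g = 1$ the integral equals $k!$. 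Reassembling the product over the $d$ coordinates gives $\prod_{i=1}^d \alpha_i!\,\delta_{\alpha_i\beta_i} = \alpha!\,\delta_{\alpha\beta}$, as claimed.

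There is no genuine obstacle here; the only points needing mild care are confirming that $H_k$ is monic in this normalisation, which follows by induction from the recurrence $H_{k+1} = sH_k - H_k'$ implied by item \eqref{item:Hermite_poly_grad}, and checking that every boundary term in the integration by parts vanishes, both of which are routine given the Gaussian weight.
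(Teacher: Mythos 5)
Your proof is correct, but note that the paper itself does not prove this lemma at all: it is stated ``without proof'' as a collection of well-known facts, with a citation to Abramowitz--Stegun. So your argument is genuinely supplying what the paper outsources to the literature. Both items check out: item \eqref{item:Hermite_poly_grad} is indeed immediate from the definition \eqref{eq:galpha}, and for item \eqref{item:Hermite_function_norm} your tensorisation $h_\alpha(x)=\prod_{i=1}^d h^{(1)}_{\alpha_i}(x_i)$, valid because $f_\infty$ is the standard (isotropic) Gaussian, correctly reduces everything to the one-dimensional identity $\int_\R h^{(1)}_k h^{(1)}_\ell\, g^{-1}\,ds=k!\,\delta_{k\ell}$, which your $k$-fold integration by parts establishes; the boundary terms vanish since every $g^{(m)}$ is a polynomial times a Gaussian. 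The one point the paper leaves implicit and you rightly pin down is the normalisation: Remark \ref{rem:hermite_poly} only says $H_\alpha$ has degree $\abs{\alpha}$, so the constant $k!$ (rather than some other multiple) hinges on $H_k$ being \emph{monic}, and your induction via the recurrence $H_{k+1}=sH_k-H_k'$ (itself a one-dimensional instance of item \eqref{item:Hermite_poly_grad} combined with $g'=-sg$) closes that gap cleanly. In short: a correct, self-contained elementary proof where the paper offers only a citation.
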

%As these results are standard we shall not show them here. 

Combining Theorem \ref{thm:spectraldecomposition} with Lemma \ref{lem:Hermite_properties} we see that any $f\in L^2\pa{\R^d,f_\infty^{-1}}$ can be written with respect to the above basis as
\begin{equation}\label{eq:hermite_decomp}
	f=\sum_{k=0}^\infty \sum_{\abs{\alpha}=k} d_{\alpha}h_{\alpha}\,,
\end{equation}
where
\begin{equation}\label{eq:hermite_coeff}
	d_\alpha = \frac1{\alpha!} \inner{f,h_{\alpha}}_{L^2\pa{\R^d,f_\infty^{-1}}}.
\end{equation}
This observation, as well as the regularisation properties of the Fokker-Planck equation, give us the following:

\begin{lemma}\label{lem:derivative_coefficients}
	Consider the normalised Fokker-Planck equation \eqref{eq:fpenormalized} and let $f(t)$ be the solution to it with initial condition $f_0\in L^2\pa{\R^d,f_{\infty}^{-1}}$. Then for any $t>0$ we have that $f(t)\in L^2\pa{\R^d,f_{\infty}^{-1}}\cap C^{\infty}\pa{\R^d}$. Furthermore, $\partial_{x_j} f(x,t)+x_j f\pa{x,t}\in L^2\pa{\R^d,f_{\infty}^{-1}}$ for any $j=1,\dots,d$ and $t>0$, and
	\begin{equation}\label{eq:derivative_coeff}
		\inner{\partial_{x_j}f(x,t)+x_{j}f(x,t),h_{\alpha}(x)}_{L^2\pa{\R^d,f_{\infty}^{-1}}}=\inner{f(t),h_{\alpha+\bm{e}_j}}_{L^2\pa{\R^d,f_{\infty}^{-1}}}.
	\end{equation}
	Consequently, writing
	\begin{equation}\label{eq:L2_expansion}
		f(t)=\sum_{k=0}^\infty \sum_{\abs{\alpha}=k} d_{\alpha}(t)h_{\alpha},
	\end{equation}
	we have that for any $t>0$
	\begin{equation}\label{eq:hermite_derivative_decomp}
		\partial_{x_j} f(x,t)+x_j f\pa{x,t} = \sum_{k=0}^\infty \sum_{\abs{\alpha}=k} \pa{\alpha_j+1}d_{\alpha+\bm{e}_j}(t)h_{\alpha}(x),
	\end{equation}
	where $j=1,\dots, d$.
\end{lemma}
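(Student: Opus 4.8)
The plan is to handle the three assertions in order, treating the regularity and decay of $f(t)$ as the prerequisite that makes the two analytic statements rigorous. First I would establish smoothing: under \ConA---and specifically condition \eqref{item:cond_no_invariant_subspace_to_kernel}, which is exactly the H\"ormander/Kalman-type rank condition for \eqref{eq:fpenormalized}---the operator $L$ is hypoelliptic, so $f(t)=e^{-Lt}f_0\in C^\infty(\R^d)$ for every $t>0$, while membership in $\h$ is immediate from the boundedness of $e^{-Lt}$ on $\h$ (cf. Theorem \ref{thm:propagator}). The quantitative input I actually need is that $f(t)$ and $\nabla f(t)$ decay like $f_\infty$ up to polynomial factors; this follows from the explicit Gaussian kernel of the constant-coefficient equation \eqref{eq:fpenormalized}. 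It ensures both that $\partial_{x_j}f(t)+x_jf(t)\in\h$ (so the second assertion holds) and that the boundary terms in the integration by parts below vanish.

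The core is identity \eqref{eq:derivative_coeff}, which I would read as an adjoint relation for the creation operator $A_j:=\partial_{x_j}+x_j$. Using $\partial_{x_j}f_\infty=-x_jf_\infty$ one has the conjugation formula $A_j g = f_\infty\,\partial_{x_j}\pa{g/f_\infty}$, so that for $g=f(t)$,
\begin{equation*}
	\inner{A_j g,\,h_\alpha}_{\h}=\int_{\R^d}\partial_{x_j}\!\pa{\frac{g}{f_\infty}}\,h_\alpha\,dx
	=-\int_{\R^d}\frac{g}{f_\infty}\,\partial_{x_j}h_\alpha\,dx
	=\inner{g,\,-\partial_{x_j}h_\alpha}_{\h},
\end{equation*}
where I used $h_\alpha f_\infty^{-1}=H_\alpha$ in the first step and integrated by parts in the second. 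Lemma \ref{lem:Hermite_properties}\eqref{item:Hermite_poly_grad} then gives $-\partial_{x_j}h_\alpha=h_{\alpha+\bm{e}_j}$, which turns the right-hand side into $\inner{g,h_{\alpha+\bm{e}_j}}_{\h}$ and proves \eqref{eq:derivative_coeff}; equivalently, $A_j^\ast=-\partial_{x_j}$ on $\h$.

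For the expansion \eqref{eq:hermite_derivative_decomp} I would expand $A_j g\in\h$ in the Hermite basis as $A_j g=\sum_k\sum_{\abs{\beta}=k}c_\beta(t)h_\beta$ with $c_\beta(t)=\frac1{\beta!}\inner{A_j g,h_\beta}_{\h}$. By \eqref{eq:derivative_coeff} and Lemma \ref{lem:Hermite_properties}\eqref{item:Hermite_function_norm},
\begin{equation*}
	c_\beta(t)=\frac1{\beta!}\inner{g,h_{\beta+\bm{e}_j}}_{\h}=\frac{(\beta+\bm{e}_j)!}{\beta!}\,d_{\beta+\bm{e}_j}(t)=(\beta_j+1)\,d_{\beta+\bm{e}_j}(t),
\end{equation*}
using $(\beta+\bm{e}_j)!=(\beta_j+1)\,\beta!$. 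Relabelling $\beta$ as $\alpha$ yields \eqref{eq:hermite_derivative_decomp}.

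The step I expect to be the main obstacle is the rigorous justification of the integration by parts in the second paragraph, i.e. controlling the growth of $f(t)$ and $\nabla f(t)$ against the weight $f_\infty^{-1}$ so that the boundary terms vanish and simultaneously $A_j g\in\h$. The cleanest route is the explicit heat-kernel representation of the constant-coefficient semigroup, which furnishes Gaussian tails for $f(t)$, uniformly on compact time intervals in $t>0$. An alternative is to verify \eqref{eq:derivative_coeff} first on the dense subspace of finite Hermite expansions---where every manipulation is a Schwartz-class computation---and then pass to the limit, using the $\h$-boundedness of $e^{-Lt}$ together with the closedness of the operators $A_j$ on $\h$.
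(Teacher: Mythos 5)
Your algebraic core --- the conjugation $\partial_{x_j}g + x_jg = f_\infty\,\partial_{x_j}\pa{g/f_\infty}$, the integration by parts against $h_\alpha$, and the coefficient identity $\pa{\beta+\bm{e}_j}! = \pa{\beta_j+1}\,\beta!$ --- coincides exactly with the paper's proof. The genuine gap lies in the analytic input you invoke to make these manipulations legitimate. The claim that ``$f(t)$ and $\nabla f(t)$ decay like $f_\infty$ up to polynomial factors'' is false. For the normalised equation the kernel covariance is $\W(t) = \II - e^{-\C t}e^{-\C^T t} \prec \II$, so the convolution structure makes the solution inherit the tails of $f_0$, not of the kernel: taking $f_0$ Gaussian with covariance $\Sigma_0$ satisfying $\II \prec \Sigma_0 \prec 2\,\II$ (so that $f_0\in\h$), the solution $f(t)$ is Gaussian with covariance $\II + e^{-\C t}\pa{\Sigma_0-\II}e^{-\C^T t}\succ \II$ for every finite $t$; along some direction it decays strictly slower than $f_\infty$, and no polynomial factor repairs this. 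Consequently your route to the two facts you actually need --- that $\partial_{x_j}f(t)+x_jf(t)\in\h$ and that the boundary terms vanish --- breaks down.

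Both facts are true, but they require different justifications, and this is where the paper invests its effort. Membership $\partial_{x_j}f(t)+x_jf(t)\in\h$ for $t>0$ is an instantaneous hypoelliptic regularisation statement (generation of finite $2$-Fisher information), which the paper cites from Theorem 4.8 of \cite{AE} (equivalently Theorem A.12 of \cite{Villani2009}); it is not a pointwise consequence of the kernel formula. For the boundary terms, far less than Gaussian decay is needed: it suffices that $\abs{x}^k\abs{f(x,t)}\to 0$ as $\abs{x}\to\infty$ for every $k$, and \emph{this} does follow from the kernel representation, because $f_0\in\h$ implies $\abs{x}^k f_0\in L^1\pa{\R^d}$ by Cauchy--Schwarz against $f_\infty^{1/2}$ --- which is exactly the paper's argument. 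Finally, your fallback density argument is also under-powered as stated: $\h$-boundedness of $e^{-Lt}$ plus closedness of $A_j$ does not allow passage to the limit, since nothing guarantees that $A_jf^{(n)}(t)$ converges (or is even bounded) in $\h$ when $f_0^{(n)}\to f_0$ in $\h$; to run that argument you would need precisely the quantitative form of the regularisation estimate above, namely a bound of $I_2^{\II}\pa{f^{(n)}(t)|f_\infty}$ by $c_t\norm{f_0^{(n)}}^2_{\h}$ for $t>0$.
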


\begin{remark}\label{rem:I_exists_at_0}
	We would like to point out that Lemma \ref{lem:derivative_coefficients} only requires that the initial datum, $f_0$, will be in $L^2\pa{\R^d,f_{\infty}^{-1}}$. There is no requirement that the $2-$Fisher information would be finite at time $t=0$. This is exactly why we can infer that $\partial_{x_j} f(x,t)+x_j f\pa{x,t}\in L^2\pa{\R^d,f_{\infty}^{-1}}$ for any $j=1,\dots,d$ \textit{only} for $t>0$. 
	%If., however, we assume that $I_2^{\II}\pa{g_0|f_\infty}<\infty$, then the above will hold for $t=0$ as well.
\end{remark}

\begin{proof}[Proof of Lemma \ref{lem:derivative_coefficients}]
	The first part of the lemma is a standard result in the study of Fokker-Planck equations of the form \eqref{eq:fokkerplanck} where \conA are satisfied (see, for instance, Corollary 2.6 in \cite{AE}). Theorem 4.8 of the same study, or Theorem A.12 in \cite{Villani2009}, show the instantaneous generation of a finite $2-$Fisher information, i.e. that for any $t>0$ we have that
	%\footnote{Recall that the condition $g\in L^2\pa{\R^d,f_{\infty}^{-1}}$ implies that $e_2\pa{g|f_\infty}<\infty$.}
	$$\nabla\pa{\frac{f(x,t)}{f_\infty(x)}} =\frac{\nabla f\pa{x,t}+xf(x,t)}{f_\infty(x)} \in L^2\pa{\R^d,f_\infty}^d,$$
	or equivalently that $\nabla f\pa{x,t}+xf(x,t)\in L^2\pa{\R^d,f_\infty^{-1}}^d$. 
	
	Identity \eqref{eq:derivative_coeff} follows from a simple integration by parts 
	%\footnote{This is justified by approximating $h_{\alpha}$ in $H^1\pa{\R^d,dx}$ by a compactly supported function and the fact that $\partial_{x_j} h_{\alpha} \in L^2\pa{\R^d,f_{\infty}^{-1}}$.} 
	and part \eqref{item:Hermite_poly_grad} of Lemma \ref{lem:Hermite_properties}:
	$$	\inner{\partial_{x_j}f(x,t)+x_{j}f(x,t),h_{\alpha}(x)}_{L^2\pa{\R^d,f_{\infty}^{-1}}} = \int_{\R^d} \partial_{x_j}\pa{\frac{f(x,t)}{f_\infty(x)}}h_\alpha(x)dx$$
	$$=-\int_{\R^d}\frac{f\pa{x,t}}{f_\infty(x)}\partial_{x_j}h_{\alpha}(x) dx=\inner{f(t),h_{\alpha+\bm{e}_j}}_{L^2\pa{\R^d,f_{\infty}^{-1}}} .$$
	The integration by parts is justified\footnote{For $d=1$ this justification follows from the density of $C_c^\infty(\R)$ in the weighted Sobolev space $H^1(\R,f_\infty)$ \cite[\S1]{Zh98}. But the authors are not aware of an analogous result for $\R^d$, $d>1$.} since for any $\alpha\in \N_0^d$ and any $t>0$ we have that 
	$$\abs{x}^{\abs{\alpha}}\abs{f\pa{x,t}} \underset{\abs{x}\to\infty}{\longrightarrow}0.$$
	This follows from the explicit formula for the solution of the Fokker-Planck equation (see \cite{AE})	
	\begin{equation}\nonumber
		f(x,t)=\int_{\R^d}G(x-e^{-\C t}y,t)f_0(y)dy,
	\end{equation}
	where 
	\begin{equation*}%\label{eq:W}
		G(z,t)=\frac{1}{\pa{2\pi}^{\frac{d}{2}}\sqrt{\det \W(t)}}e^{-\frac{1}{2}z^T \W(t)^{-1}z},\quad \W(t):=2\int_{0}^t e^{-\C s}\D e^{-\C^T s}ds.
	\end{equation*} 
	Indeed, since $f_0\in L^2\pa{\R^d,f_{\infty}^{-1}}$ we see that if we define $f_{0,k}:=\abs{x}^{k}f_0(x)$ for a given $k\in\N_0$, then $f_{0,k}\in L^1\pa{\R^d,dx}$ and 
	$$\abs{x}^{k}\abs{f\pa{x,t}} \leq 2^{k}\Big(\int_{\R^d} \abs{x-e^{-\C t}y}^k G(x-e^{-\C t}y,t)\abs{f_0(y)}dy$$
	$$+ C\int_{\R^d} G(x-e^{-\C t}y,t)\abs{f_{0,k}(y)}dy\Big)<\infty,$$
	where the second line above follows from the fact that $\C$ is positively stable.\\
	This implies that for any $\alpha\in \N_0^d$ and any $t>0$ we have that 
	$$0 \leq \abs{x}^{\abs{\alpha}}\abs{f\pa{x,t}}  \leq \frac{\sup_{z\in\R^d}\abs{z}^{\abs{\alpha}+1}\abs{f\pa{z,t}}}{\abs{x}}\underset{\abs{x}\to\infty}{\longrightarrow}0.$$
	We conclude \eqref{eq:hermite_derivative_decomp} from the fact that 
	$$\frac1{\alpha !}\inner{	\partial_{x_j} f(x,t)+x_j f\pa{x,t} , h_\alpha(x)}_{\h}=\pa{\alpha_j+1}\frac{\inner{f(t),h_{\alpha+\bm{e}_j}}_{L^2\pa{\R^d,f_{\infty}^{-1}}}}{\pa{\alpha+\bm{e}_j}!}.$$
\end{proof}

Lemma \ref{lem:derivative_coefficients} gives us the ability to connect between the $L^2$ expansion of a function and its $2-$Fisher information in the following way:

\begin{theorem}\label{thm:norm_and_fisher}
	Consider the normalised Fokker-Planck equation \eqref{eq:fpenormalized} and let $f(t)$ be the solution to it with initial condition $f_0\in L^2\pa{\R^d,f_{\infty}^{-1}}\cap \pa{ \bigoplus_{k=0}^{m-1} V_k}^\perp$ with $m\in \N$. Writing
	$$f(t)=\sum_{k=0}^\infty \sum_{\abs{\alpha}=k} d_{\alpha}(t)h_{\alpha}$$
		we have that for any $t>0$
	\begin{equation}\label{eq:norm_and_fisher}
		I_2^{\II}\pa{f(t)|f_\infty} = \sum_{k=m}^\infty k\sum_{\abs{\alpha}=k} \alpha!\;d^2_{\alpha}(t).
	\end{equation}
%	When we know in addition that $I_2^{\II}\pa{g_0|f_\infty}<\infty$, the above also holds for $t=0$.
\end{theorem}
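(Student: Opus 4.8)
The plan is to start from the representation of the Fisher information as a sum of squared norms, $I_2^{\II}\pa{f(t)|f_\infty} = \sum_{j=1}^d \norm{J_j(t)}^2_{\h}$ from \eqref{eq:fitonorm}, and to evaluate each norm by expanding $J_j(t) = \partial_{x_j} f(x,t) + x_j f\pa{x,t}$ in the Hermite basis. The essential input is the expansion \eqref{eq:hermite_derivative_decomp} of Lemma \ref{lem:derivative_coefficients}, valid for $t>0$, which reads
$$J_j(t) = \sum_{k=0}^\infty \sum_{\abs{\alpha}=k} \pa{\alpha_j+1} d_{\alpha+\bm{e}_j}(t)\, h_\alpha.$$
Since the $h_\alpha$ are orthogonal with $\norm{h_\alpha}^2_{\h} = \alpha!$ by Lemma \ref{lem:Hermite_properties}\eqref{item:Hermite_function_norm}, Parseval's identity turns each squared norm into $\norm{J_j(t)}^2_{\h} = \sum_{\alpha} \pa{\alpha_j+1}^2 \alpha!\, d^2_{\alpha+\bm{e}_j}(t)$.

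Next I would reindex each of these sums by setting $\beta := \alpha + \bm{e}_j$, so that the summation runs over all $\beta$ with $\beta_j \geq 1$. Under this substitution $\alpha_j+1 = \beta_j$ and the factorial identity $\alpha! = \beta!/\beta_j$ holds, whence $\pa{\alpha_j+1}^2 \alpha! = \beta_j\, \beta!$. Summing over $j=1,\dots,d$ and using $\sum_{j=1}^d \beta_j = \abs{\beta}$ (the terms with $\beta_j=0$ contributing nothing) collapses the $d$ separate sums into a single expression,
$$I_2^{\II}\pa{f(t)|f_\infty} = \sum_{\beta} \abs{\beta}\,\beta!\, d^2_\beta(t) = \sum_{k=0}^\infty k \sum_{\abs{\beta}=k} \beta!\, d^2_\beta(t).$$

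Finally, to raise the lower summation bound from $k=0$ to $k=m$, I would invoke the flow-invariance of the spaces $V_k$ established in Theorem \ref{thm:spectraldecomposition}(iii). Because each $V_k$ is invariant under both $L$ and its adjoint, the orthogonal projection onto $V_k$ commutes with the semigroup $e^{-Lt}$; hence the hypothesis $f_0 \in \pa{\bigoplus_{k=0}^{m-1} V_k}^\perp$ propagates to $f(t) \in \pa{\bigoplus_{k=0}^{m-1} V_k}^\perp$ for all $t$, forcing $d_\alpha(t)=0$ whenever $\abs{\alpha}<m$. This annihilates the terms with $k<m$ and yields \eqref{eq:norm_and_fisher}.

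The only genuinely delicate point I anticipate is justifying that the interchange of the finite sum over $j$ with the Hermite series, and the reindexing of the latter, is legitimate, i.e.\ that the relevant series converge absolutely. For $t>0$ this is precisely what Lemma \ref{lem:derivative_coefficients} provides, since it guarantees $J_j(t)\in\h$ and therefore the finiteness of $\sum_\alpha \pa{\alpha_j+1}^2 \alpha!\, d^2_{\alpha+\bm{e}_j}(t)$; the remaining factorial bookkeeping in the change of index is routine.
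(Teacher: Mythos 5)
Your proposal is correct and follows essentially the same route as the paper's own proof: both rest on the representation \eqref{eq:fitonorm}, the Hermite expansion of $J_j$ from Lemma \ref{lem:derivative_coefficients}, orthogonality of the $h_\alpha$ with $\norm{h_\alpha}^2_{\h}=\alpha!$, the factorial identity $\pa{\alpha_j+1}^2\alpha! = \beta_j\,\beta!$ under the shift $\beta=\alpha+\bm{e}_j$, the summation $\sum_j \beta_j = \abs{\beta}$, and the flow-invariance of $\pa{\bigoplus_{k=0}^{m-1}V_k}^\perp$ to discard the terms with $k<m$. The only cosmetic difference is the order of operations (the paper reindexes inside the expansion before applying orthogonality, and invokes flow-invariance at the outset rather than at the end), which does not change the substance of the argument.
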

\begin{proof} %[Proof of Theorem \ref{thm:norm_and_fisher}]
	Since $f_0\in \pa{ \bigoplus_{k=0}^{m-1} V_k}^\perp$, which is invariant under the flow of the equation, we find that $f(t)\in \pa{ \bigoplus_{k=0}^{m-1} V_k}^\perp$ for all $t\geq 0$. Consequently if we write 
	$$f(t)=\sum_{k=0}^\infty \sum_{\abs{\alpha}=k} d_{\alpha}(t)h_{\alpha}$$
	we find that $d_{\alpha}(t)=0$ for any multi-index $\alpha$ such that $\abs{\alpha}<m$. As we saw in Lemma \ref{lem:derivative_coefficients}, for any $j=1,\dots,d$ we have that for any $t>0$
	$$J_j(x,t)=f_\infty(x)\partial_{x_j} \pa{\frac{f(x,t)}{f_\infty(x)}}=\partial_{x_j}f(x,t)+x_jf\pa{x,t} $$
	$$= \sum_{k=0}^\infty \sum_{\abs{\alpha}=k}\pa{\alpha_j+1} d_{\alpha+\bm{e}_j}(t)h_{\alpha}(x)=\sum_{k=m}^\infty \sum_{\abs{\alpha}=k} \alpha_jd_{\alpha}(t)h_{\alpha-\bm{e}_j}(x).$$
	Using \eqref{eq:fitonorm}
	and the orthogonality of $\br{h_\alpha}$, we find that
	$$I_2^{\II}(f(t)|f_\infty) =\sum_{j=1}^d\sum_{k=m}^\infty \sum_{\abs{\alpha}=k} \alpha_j^2\pa{\alpha - \bm{e}_j}!\;d^2_{\alpha}(t) =\sum_{j=1}^d\sum_{k=m}^\infty \sum_{\abs{\alpha}=k} \alpha_j \alpha!\;d^2_{\alpha}(t)$$
	$$=\sum_{k=m}^\infty \sum_{\abs{\alpha}=k} \abs{\alpha} \alpha!\;d^2_{\alpha}(t)=\sum_{k=m}^\infty k\sum_{\abs{\alpha}=k} \alpha!\;d^2_{\alpha}(t),$$
	which is the desired result. 
\end{proof}
%\begin{remark}\label{rem:t=0_again}
%		Following the above proof and the reasoning presented in Remark \ref{rem:I_exists_at_0}, we see that the when we we have that $I_2\pa{g_0|f_\infty}<\infty$ then we can conclude that \eqref{eq:norm_and_fisher} holds for $t=0$ as well.
%\end{remark}

We are missing only one additional ingredient to be able to prove our main theorem --- the evolution of the expansion coefficients $\br{d_{\alpha}}$. This can be found by combining Proposition 5.3 and Theorem 6.1 in \cite{ASS}:
\begin{theorem}\label{thm:vmdecay}
	Consider the normalised Fokker-Planck equation \eqref{eq:fpenormalized} and let $f(t)$ be the solution to it with initial condition $f_0\in L^2\pa{\R^d,f_{\infty}^{-1}}$. Then for any $t\geq0$ and any $m\in\N$ we have that
	\begin{equation}\label{eq:l2decay}
		\sum_{\abs{\alpha}=m}\alpha!\;d^2_{\alpha}(t) \leq \norm{e^{-\C t}}_2^{2m} \sum_{\abs{\alpha}=m}\alpha!\;d^2_{\alpha}(0) .
	\end{equation}
\end{theorem}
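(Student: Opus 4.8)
The plan is to read the left-hand side of \eqref{eq:l2decay} as a squared Hilbert-space norm and then to exploit the tensorial structure of the flow on $V_m$. By the orthogonality in Lemma~\ref{lem:Hermite_properties}\eqref{item:Hermite_function_norm}, if $P_m$ denotes the orthogonal projection of $\h$ onto $V_m$ and we expand $f(t)=\sum_{k}\sum_{\abs{\alpha}=k}d_\alpha(t)h_\alpha$, then
$$\norm{P_m f(t)}_{\h}^2=\sum_{\abs{\alpha}=m}\alpha!\,d_\alpha(t)^2,$$
so \eqref{eq:l2decay} is equivalent to $\norm{P_m f(t)}_{\h}\leq\norm{e^{-\C t}}_2^{m}\norm{P_m f(0)}_{\h}$. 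Since $V_m$ is invariant under $L$, and hence under the semigroup $e^{-Lt}$, by Theorem~\ref{thm:spectraldecomposition}~(iii), the $V_m$-component evolves autonomously, $P_m f(t)=\pa{e^{-Lt}\big|_{V_m}}(P_m f_0)$. The whole statement therefore reduces to the operator-norm bound
$$\norm{\,e^{-Lt}\big|_{V_m}}_{\mathscr{B}(V_m)}\leq\norm{e^{-\C t}}_2^{m},\qquad t\geq0.$$

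The heart of the argument is to identify this restricted flow with a symmetric tensor power, which is exactly the information encoded in \cite{ASS} (Proposition~5.3 together with Theorem~6.1). First, the map $h_{\bm{e}_j}\mapsto\bm{e}_j$ realises $V_1$ as $\R^d$; a short computation using $h_{\bm{e}_j}=x_jf_\infty$ and $\partial_{x_i}f_\infty=-x_if_\infty$ shows that under this identification $L\big|_{V_1}$ is represented by the matrix $\C$, so that $e^{-Lt}\big|_{V_1}$ is represented by $e^{-\C t}$. Correspondingly, $V_m$ is identified with the $m$-th symmetric tensor power $\mathrm{Sym}^m(\R^d)$ via $h_\alpha\mapsto$ the symmetric tensor indexed by $\alpha$, and under this identification $e^{-Lt}\big|_{V_m}$ becomes the symmetric tensor power $\pa{e^{-\C t}}^{\odot m}$. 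A bookkeeping point I would check carefully is that the $\h$-inner product on $V_m$, which assigns $\norm{h_\alpha}^2=\alpha!$, coincides, after a single global rescaling by $m!$, with the inner product that $\mathrm{Sym}^m(\R^d)$ inherits from $(\R^d)^{\otimes m}$; since a uniform rescaling of the inner product leaves operator norms unchanged, this gives $\norm{e^{-Lt}|_{V_m}}_{\mathscr{B}(V_m)}=\norm{\pa{e^{-\C t}}^{\odot m}}$.

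It then remains to bound the norm of a symmetric tensor power. For any matrix $M$ the singular values of the full tensor power $M^{\otimes m}$ are the $m$-fold products of the singular values of $M$, whence $\norm{M^{\otimes m}}=\norm{M}_2^{m}$; moreover the leading right and left singular vectors of $M^{\otimes m}$ have the form $v^{\otimes m}$, which lie in the symmetric subspace, so the restriction $M^{\odot m}$ to $\mathrm{Sym}^m(\R^d)$ already attains this value. Taking $M=e^{-\C t}$ — and noting $\norm{e^{-\C t}}_2=\norm{e^{-\C^T t}}_2$, so that any transpose ambiguity in the identification is harmless — yields the operator-norm bound and hence \eqref{eq:l2decay}.

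The main obstacle is the middle step: rigorously verifying that the Fokker--Planck flow on $V_m$ is the symmetric tensor power of the flow on $V_1$, and that the $\alpha!$-weighting is the symmetric-tensor inner product up to a global constant. This intertwining is precisely what \cite[Prop.~5.3 and Thm.~6.1]{ASS} supply; the surrounding steps — invariance and projection, multiplicativity of singular values under tensor powers, and scale-invariance of the operator norm — are then elementary linear algebra.
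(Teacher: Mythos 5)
Your proposal is correct and takes essentially the same route as the paper: the paper's entire ``proof'' of Theorem \ref{thm:vmdecay} is the citation of Proposition 5.3 and Theorem 6.1 in \cite{ASS}, and your argument hinges on exactly that same identification of $e^{-Lt}\big|_{V_m}$ with an $m$-fold (symmetric) tensorisation of $e^{-\C t}$. The surrounding steps you supply --- commuting the projection $P_m$ with the semigroup (valid since $V_m$ is invariant under both $L$ and $L^*$ by Theorem \ref{thm:spectraldecomposition}), the $\alpha!$ versus symmetric-tensor inner-product bookkeeping, and $\norm{M^{\otimes m}}_2=\norm{M}_2^m$ via singular values --- are correct and simply flesh out what the paper leaves to the reference.
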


We conclude our short note with the proof of  our main theorem.
\begin{proof}[Proof of Theorem \ref{thm:fulldecay}]
	Fix some $t_0>0$. Using Theorem \ref{thm:norm_and_fisher} together with a time shifted version of Theorem \ref{thm:vmdecay} we find that for $t\geq t_0$
	$$I_2^{\II}\pa{f(t)|f_\infty} = \sum_{k=m}^\infty k\sum_{\abs{\alpha}=k} \alpha!\;d^2_{\alpha}(t) \leq \sum_{k=m}^\infty k\norm{e^{-\C \pa{t-t_0}}}_2^{2k} \sum_{\abs{\alpha}=k} \alpha!\;d^2_{\alpha}(t_0).$$
	As was discussed in \cite{ASS}, $\norm{e^{-\C  \pa{t-t_0}}}_2 \leq 1$ and as such, using \eqref{eq:norm_and_fisher} again, we find that for $t\geq t_0$
		$$I_2^{\II}\pa{f(t)|f_\infty} \leq \norm{e^{-\C  \pa{t-t_0}}}_2^{2m}  \sum_{k=m}^\infty k\sum_{\abs{\alpha}=k} \alpha!\;d^2_{\alpha}(t_0)=\norm{e^{-\C  \pa{t-t_0}}}_2^{2m}I_2^{\II}\pa{f(t_0)|f_\infty}.$$
	Since $I_2^{\II}\pa{f_0|f_\infty}< \infty$, taking $t_0$ to zero in the above increases the right hand side as both factors $\norm{e^{-\C\pa{t-t_0}}}_2$ and $I_2^\II\pa{f\pa{t_0}|f_\infty}$ increase when $t_0$ goes to zero. The latter follows from Proposition 4.5 in \cite{AE} (using $\PP:=\II$). This concludes the proof.
\end{proof}

\bibliographystyle{plain}

\end{document}